%
%
\documentclass{siamltex}
\usepackage{amsfonts,latexsym}
\usepackage{epsfig}
\usepackage{amsmath}
\usepackage{graphicx}
\usepackage{amsmath}
\usepackage{color}
\usepackage{ulem}

\usepackage{algorithm,algorithmic}

\setlength{\textwidth}     {14.5cm}

\title{A rational Krylov subspace method for the computation of the matrix exponential operator}

\author{
H. Barkouki \thanks{Ecole Nationale Supérieure d'Arts et Métiers, Meknès, Morocco. Email: h.barkouki@umi.ac.ma}. \and A. H. Bentbib\thanks{Facult\'e des Sciences et Techniques-Gueliz, Laboratoire de Math\'ematiques Appliqu\'ees et Informatique, Morocco. Email: a.bentbib@uca.ma} $^*$ \and K. Jbilou \thanks{Université du Littoral, C\^ote d’Opale, batiment H. Poincarré,
50 rue F. Buisson, F-62280 Calais Cedex, France. University UM6P, Benguerir Morocco.  Email: Khalide.Jbilou@univ-littoral.fr} }
\date{}

\begin{document}
\maketitle

\begin {abstract}
The computation of approximating $e^{tA}B$, where A is a large sparse matrix and B is a rectangular matrix, serves as a crucial element in numerous scientific and engineering calculations. A powerful way to consider this problem is to use Krylov subspace methods. The purpose of this work is to approximate the matrix exponential and some Cauchy-Stieltjes functions on a block vectors $B$ of $\mathbb{R}^{n \times p}$ using a rational block Lanczos algorithm. We also derive some error estimates and error bound for the convergence of the rational approximation and finally numerical results attest to the computational efficiency of the proposed method.
\end {abstract}

\begin{keywords}Rational Krylov methods, Matrix exponential, Cauchy-Stieltjes function, Rational block Lanczos, Rational approximation.
\end{keywords}

\section{Introduction}
The problem of approximating the matrix function 
\begin{equation}\label{equat11}
U=f(A)B,
\end{equation}
for a given matrix B of $\mathbb{R}^{n \times p}$ and $A \in \mathbb{R}^{n \times n}$  carries significant importance across numerous applications.

We are particularly interested in the exact solution of the linear ordinary differential equation systems of the form
\begin{equation}
X^{'}(t)=A X(t) \qquad \textit{and} \qquad X(0)=B,
\end{equation}
where X(t) is the matrix exponential of the form
\begin{equation}\label{equat1}
X(t)=e^{t A} B,
\end{equation}
for a fixed constant $t \geq 0$.

This approximation is a fundamental technique in many numerical methods for solving systems of ordinary differential equations or time-dependent partial differential equations. 

Over the years, various methods have been proposed to approximate the matrix exponential to a block vector B, where $1 \leq p \ll n$. In 1978, Moler and van Loan \cite{moler}  published a paper discussing nineteen questionable methods for computing the exponential of a matrix. Subsequently, Krylov subspace methods have emerged as an important development in solving the problem (\ref{equat1}) when the matrix A is sparse and of a large size.

To summarize, approximating the matrix exponential is a fundamental technique used in solving differential equations, and various methods have been proposed for approximating the matrix exponential to a block vector B. Krylov subspace methods are particularly useful in solving large and sparse matrices.

In general, there are two types of Krylov subspace methods for evaluating (\ref{equat1}) when the matrix A is large and sparse \cite{popo}. The first class of methods involves projecting the matrix A into a much smaller subspace, applying the exponential to the reduced matrix, and then projecting the approximation back to the original large space. Several methods fall under this category, including \cite{bentbib, bot, drus1, drus2, eier, moret, saad4}. The second category of methods employs a direct approximation approach, where the matrix exponential $e^A$ is replaced with an explicitly computed rational function r such that $ r(A) \approx e^A $, and then the action of the matrix exponential is evaluated. This approach has been used in several methods, including \cite{from1, from2, lopez, tref1, tref2, wu}.

 However, both of these methods require linear system solves with shifted versions of A, and in rational Krylov methods, a linear system is typically solved per iteration. Hence, a rational Krylov iteration typically incurs higher computational costs compared to a polynomial Krylov iteration, which solely requires performing a matrix-vector product with A. The efficiency of rational Krylov methods depends on the ability to solve these linear systems quickly. Rational functions may exhibit superior approximation properties to polynomials, so the overall number of iterations required by rational Krylov methods may be smaller than that required by polynomial methods, provided that the poles of the rational functions are chosen appropriately.

The objective of this paper is to address the problem (\ref{equat1}) for $p > 1$ using a new variant of rational block Lanczos algorithm \cite{bar1, bar2, nguy23}. In general, rational Krylov methods involve  finding approximation of the form of

\begin{equation}
	r_{m-1}(A)B,
\end{equation}

where $r_{m-1}$ represents a rational function with a numerator and denominator of degree m-1. The paper provides theoretical analysis by deriving error estimates and a priori error bound specifically for the rational block Lanczos method. Importantly, these results have broad applicability beyond the specific task of approximating the matrix exponential, extending to various other contexts, particularly in the approximation of Cauchy-Stieltjes functions.

 The paper is organised as follows. In section 2 we recall some basic results on Krylov polynomial approximation of the exponential. In section 3 we derive a rational block Lanczos algorithm and give some algebraic properties. Krylov rational approximation and some theoretical analysis are also derived for the matrix exponential. In section 4 we give an adaptive choice of interpolation points. Section 5 describes the application of the proposed process to the approximation of some matrix functions. The last section is devoted to some numerical experiments which attest to the computational efficiency of the proposed method.

\section{The Krylov subspace approach}
This section reviews the problem of approximating the matrix $e^A B$ using polynomial approximation, see \cite{saad5, saad4} for more details. The goal is to find an approximation of the form 
\begin{equation}
e^{A}B \approx p_{m-1}(A)B,
\end{equation}
 where $p_{m-1}$ is a polynomial of degree m-1. This approximation belongs to the Krylov subspace $K_m$, which is defined as 
 
 $$K_m :=K_m(A,B)={\rm Range} \lbrace B, AB, . . . ,A^{m-1}B \rbrace,$$

To solve this problem, the task is to find an element of $K_m$ that approximates $X = e^A B$.

 Two well-known
algorithms for finding such approximations are Arnoldi and Lanczos ones.

\subsection{Exponential approximation using the Lanczos algorithm}
 A well known algorithm for building a convenient basis of $K_m$ is  the Lanczos algorithm.
This process starts with two block vectors $V_1$ and $W_1$ and generates a bi-orthogonal basis of the subspaces $K_m(A, B)$ and $K_m(A^{T} ,C)$.
 such that 
$$K_{m}(A,B)={\rm Range} ( V_{1},V_{2},\ldots,V_{m} ).$$
and 
$$K_{m}(A^{T},C)={\rm Range} ( W_{1},W_{2},\ldots,W_{m} ).$$

The matrices $V_i$ and $W_j$ obtained through the block Lanczos algorithm exhibit biorthogonality conditions, i.e.
\begin{equation}\label{equa2.3}
\left\{ 
\begin{array}{c c c}
W_{j}^{T} V_{i}=0_{p}, \ \textit{if} \ i \neq j,\\
 W_{j}^{T} V_{i}=I_{p}, \ \textit{if} \ i = j.\\
\end{array}
\right. 
\end{equation}

Next, we give a stable version of the nonsymmetric block Lanczos algorithm that was defined in \cite{bai1}. The algorithm is summarized as follows.

\begin{algorithm}[h]
  \caption{The nonsymmetric block Lanczos algorithm ({\tt NBLA})  }
  \label{alg1}
$\bullet$ \textbf{Inputs:} $A\in \mathbb{R}^{n \times n}, B, C \in \mathbb{R}^{n \times p}$ and m an integer.\\
\begin{enumerate}
\item \textbf{Compute the QR decomposition of $C^{T} B$, i.e.,} $C^{T} B= \delta \beta $;\\
 $V_{1}=B \beta^{-1}; W_{1}=C \delta; \widetilde{V}_{2}=AV_{1}; \widetilde{W}_{2}=A^{T} W_{1};$\\
\item \textbf{For} $j=1,\ldots,m$ \\
\quad $\alpha_{j}=W_{j}^{T} \widetilde{V}_{j+1}; \widetilde{V}_{j+1}=\widetilde{V}_{j+1}-V_{j} \alpha_{j}; \widetilde{W}_{j+1}=\widetilde{W}_{j+1}-W_{j} \alpha_{j}^{T};$\\
\quad  \textbf{Compute the QR decomposition of $\widetilde{V}_{j+1}$ and $\widetilde{W}_{j+1}$, i.e.,}\\
\quad $\widetilde{V}_{j+1}=V_{j+1} \beta_{j+1}; \widetilde{W}_{j+1}=W_{j+1} \delta_{j+1}^{T};$\\
\quad  \textbf{Compute the singular value decomposition of $W_{j+1}^{T} V_{j+1}$, i.e.,}\\
\quad $W_{j+1}^{T}V_{j+1}=U_{j} \Sigma_{j} Z_{j}^{T}$;\\
\quad $\delta_{j+1}=\delta_{j+1} U_{j} \Sigma^{1/2}_{j}; \beta_{j+1}= \Sigma^{1/2}_{j} Z_{j}^{T} \beta_{j+1}$;\\
\quad $V_{j+1}=V_{j+1}Z_{j} \Sigma^{-1/2}_{j}; W_{j+1}=W_{j+1}U_{j} \Sigma^{-1/2}_{j};$\\
\quad $\widetilde{V}_{j+2}=AV_{j+1}-V_{j}\delta_{j+1}; \widetilde{W}_{j+2}=A^{T}W_{j+1}-W_{j}\beta_{j+1}^{T};$
\item \textbf{end For}.
\end{enumerate}
\end{algorithm}

Setting $\mathbb{V}_{m}=[V_{1},V_{2},\ldots,V_{m} ]$ and $\mathbb{W}_{m}=[ W_{1},W_{2},\ldots,W_{m} ]$, we have the following block Lanczos relations
$$A\mathbb{V}_{m}=\mathbb{V}_{m}A_{m}+V_{m+1} \beta_{m+1}E_{m}^{T},$$
and
$$A^{T}\mathbb{W}_{m}=\mathbb{W}_{m}A_{m}^{T}+W_{m+1} \delta_{m+1}^{T}E_{m}^{T},$$
where $E_m$ is last $mp\times p$ block of the identity matrix $I_{mp}$ and $A_m$ is the $mp \times mp$ block tridiagonal matrix defined by
\[
A_{m}=\left( 
\begin{array}{cccccc}
\alpha_{1} & \delta_{2}&  &  & \\ 
\beta_{2} & \alpha_{2} & . &  &  \\ 
 &  .& . & .&  \\ 
 &  &  .& . &  \delta_{m} \\ 
 &  &  & \beta_{m}& \alpha_{m}%
\end{array}%
\right).
\]
Because the biorthogonality condition of the matrices $\mathbb{V}_{m}$ and $\mathbb{W}_{m}$ we have $A_m=\mathbb{W}_{m}^{T}A \mathbb{V}_{m}$ and as a result $A_m$ represents the projection of the linear transformation $A$ to the subspace $K_m$. 

The matrix $X_{m}=\mathbb{V}_{m}\mathbb{W}_{m}^{T}e^{A}B$ is the projection of $e^{A}B$ on $K_m$, i.e., it is the closest approximation to $e^{A}B$ from $K_m$. 

Let $\beta \in \mathbb{R}^{p \times p}$ defining in algorithm 1, it follows immediately that
$$\mathbb{V}_{m}\mathbb{W}_{m}^{T}e^{A}B=\mathbb{V}_{m}\mathbb{W}_{m}^{T}e^{A}V_1\beta=\mathbb{V}_{m}\mathbb{W}_{m}^{T}e^{A}\mathbb{V}_{m}E_1\beta,$$
the optimal fit is therefore $X_{m}=\mathbb{V}_{m}Y_{m}$ in which $Y_{m}=\mathbb{W}_{m}^{T}e^{A}\mathbb{V}_{m}E_1\beta$. Regrettably, the computation of $Y_{m}$, which involves $e^{A}$, is not feasible in practice. Instead, an alternative approach will be adopted, which involves approximating $\mathbb{W}_{m}^{T}e^{A}\mathbb{V}_{m}$ with $e^{A_m}$, giving the approximation $Y_{m} \approx e^{A_m}E_1\beta$ and,
\begin{equation}
e^{A}B \approx \mathbb{V}_{m} e^{A_m}E_1\beta = \mathbb{V}_{m} e^{A_m} \mathbb{W}_{m}^{T}B.
\end{equation}
Now, we face the challenge of efficiently computing the block vector $e^{A_m}E_1\beta$, which is similar to the original problem we began with, but generally of significantly smaller dimensions.

Since $\mathbb{W}_{m}^{T}(tA) \mathbb{V}_{m} = tA_m$ and the Krylov subspaces associated with A and tA are identical, we can also write

\begin{equation}
e^{tA}B \approx X_m(t)=\mathbb{V}_{m} e^{tA_m}E_1\beta,
\end{equation} 
for an arbitrary scalar t.
\section{Rational approximation}
In this section, we address the rational approximation of $e^{tA}B$ using a rational block Lanczos method.
\subsection{Rational block Lanczos algorithm}
A common characteristic shared by rational Krylov methods used to compute $e^{t A} B$ is that the approximation at iteration m takes the form of $r_m (tA)B$,
 where $r_m=\dfrac{p_{m-1}}{q_{m-1}}$ is a rational function with a predetermined denominator polynomial $q_{m-1} \in \mathcal{P}_{m-1}$. In the following, we will assume that $q_{m-1}$ is factorized as

\begin{equation}
q_{m-1}(z)=\prod_{k=1}^{m-1} (1-z / \sigma_{k}),
\end{equation}

where the poles  $\sigma_1, \sigma_2, \ldots$ are numbers in the extended real plane $\bar{\mathbb{R}} := \mathbb{R} \cup \lbrace \infty \rbrace $
different
from zero.  The rational Krylov space of order m associated with (A,B) is deﬁned as 

\begin{equation}
\mathcal{K}_{m}(A,B,\Sigma_m)   =  q_{m-1} (A)^{-1} {\rm Range} \lbrace B,A B,\ldots,A^{m-1} B \rbrace.
\end{equation}

If all the poles $\sigma_1, \sigma_2, \ldots, \sigma_{m-1}$ are
 set to inﬁnity, then $q_{m-1}=1$ and the rational Krylov space  reduces to a polynomial Krylov space $K_{m}(A,B)$.
 
 This variant of rational Krylov subspace is proposed by S. Guttel \cite{guttel1}  using
 a definition based on polynomial Krylov space. This result allows to easily extend all the findings related to polynomial Krylov spaces to the rational case. In \cite{guttel1, guttel2, guttel3}, Guttel used a rational Arnoldi method for approximating $f(A)B$, specifically $e^{t A}B$. It involves obtaining an approximation of the form $r_{m} (A)B$ from a rational Krylov subspace  $\mathcal{K}_{m}(A,B,\Sigma_m)$. The result is extended her to derive a rational block Lanczos algorithm used to construct approximation of  $e^{t A}B$. This process constructs  bi-orthonormal bases of the union of block Krylov subspaces $\mathcal{K}_{m}(A,B,\Sigma_m)$ and $\mathcal{K}_{m}(A^{T},C,\Sigma_m)$. 
where $\Sigma_m=\{\sigma_{1},\ldots,\sigma_{m-1}\}$ is the set of interpolation points.

The rational block Lanczos algorithm is defined as follows
\newpage

\begin{algorithm}[h]
  \caption{The rational block Lanczos algorithm ({\tt RBLA})   }
  \label{alg2}
  \begin{enumerate}
\item \textbf{Input:} $A\in\mathbb{R}^{n \times n},\ B,\ C^{T}\in\mathbb{R}^{n \times p}$. 
\item \textbf{Output:} two biorthogonal matrices $\mathbb{V}_{m+1}$ and $\mathbb{W}_{m+1}$ of $\mathbb{R}^{n \times (m+1)p}$.\\
\item \textbf{Set} $S_{0}=B$\  {\rm and} \ $R_{0}=C$   
\item \textbf{Set} $S_{0}=V_{1}H_{1,0}$ \  {\rm and}  \  $R_{0}=W_{1}G_{1,0}$ such that $W_{1}^{T}V_{1}=I_{p}$;
\item \textbf{Initialize:} $\mathbb{V}_{1}=[V_{1}]$ \  {\rm and}  \  $\mathbb{W}_{1}=[W_{1}]$.
\item \textbf{For} $k=1,\ldots,m$
\item \ \ \ \ \ \ \textbf{if} $\lbrace\sigma_{k} = \infty\rbrace$; $S_{k}=AV_{k}$ \   {\rm and}  \ $R_{k}=A^{T}W_{k};$ \textbf{else}
\item \ \ \ \ \ \  $S_{k}=(I_n-A/ \sigma_{k})^{-1} AV_{k}$ \   {\rm and}  \ $R_{k}=(I_n-A/ \sigma_{k})^{-T} A^{T}W_{k};$  \textbf{endif}
\item \ \ \ \ \ \  $H_{k}=\mathbb{W}_{k}^{T}S_{k} $ \  {\rm and} \ $G_{k}=\mathbb{V}_{k}^{T}R_{k} $;
\item \ \ \ \ \ \ $S_{k}=S_{k}-\mathbb{V}_{k}H_{k}$ \  {\rm and} \ $R_{k}=R_{k}-\mathbb{W}_{k}G_{k};$
\item \ \ \ \ \ \ $S_{k}=V_{k+1}H_{k+1,k}$ \  {\rm and} \ $R_{k}=W_{k+1}G_{k+1,k};$ \ \ \ \  (QR factorization)
\item \ \ \ \ \ \ $W_{k+1}^{T}V_{k+1}=P_{k}D_{k}Q_{k}^{T}$; \ \  \ \ (Singular Value Decomposition)
\item \ \ \ \ \ \ $V_{k+1}=V_{k+1}Q_{k}D_k^{-1/2}$ \  {\rm and} \ $W_{k+1}=W_{k+1}P_{k}D_k^{-1/2}$;
\item \ \ \ \ \ \ $H_{k+1,k}=D_k^{1/2}Q_{k}^{T}H_{k+1,k}$ \  {\rm and} \ $G_{k+1,k}=D_k^{1/2}P_{k}^{T}G_{k+1,k}$;
\item \ \ \ \ \ \ $\mathbb{V}_{k+1}=[\mathbb{V}_{k},V_{k+1}]; \  \mathbb{W}_{k+1}=[\mathbb{W}_{k},W_{k+1}]$; 
\item  \textbf{endFor}.
\end{enumerate}
\end{algorithm}

In our setting, it is important to note that we do not have the sequence of shifts $\sigma_1, \sigma_2, \ldots, \sigma_m$ provided initially. Therefore, we need to incorporate a procedure that generates this sequence dynamically during the iterations of the process. The upcoming sections will define an adaptive procedure that ensures the generation of the sequence, while also ensuring that zero is not included as a shift. Nevertheless, this limitation does not pose a practical obstacle because we have the flexibility to select a real number $\sigma \in \mathbb{R}$ that is distinct from all the shifts. By utilizing the shifted operator $A- \sigma I$ and shifted poles $\sigma_k - \sigma $ (where the poles are non-zero), we can effectively execute the rational block Lanczos algorithm.

Let $\mathbb{H}_{m}$ and $\mathbb{G}_{m}$ be the $mp \times mp$ block upper Hessenberg matrices whose nonzeros block entries are defined by Algorithm 2, and $\widetilde{\mathbb{H}}_{m}, \widetilde{\mathbb{G}}_{m}, \widetilde{\mathbb{K}}_{m}$ and $\widetilde{\mathbb{L}}_{m}$ are the $(m+1)p \times mp$ matrices defined as
$$\widetilde{\mathbb{H}}_{m}= \left(\begin{array}{c}
\mathbb{H}_{m} \\
H_{m+1,m}E_{m}^{T} \\
\end{array}\right) \ \ , \ \ \widetilde{\mathbb{K}}_{m} \left(\begin{array}{c}
I_{mp}+\mathbb{H}_{m}\mathbb{D}_{m} \\
H_{m+1,m} \sigma_{m}^{-1} E_{m}^{T} \\
\end{array}\right),$$ 
and
$$\widetilde{\mathbb{G}}_{m}= \left(\begin{array}{c}
\mathbb{G}_{m} \\
G_{m+1,m}E_{m}^{T} \\
\end{array}\right) \ \ \textit{and} \ \ \widetilde{\mathbb{L}}_{m} \left(\begin{array}{c}
I_{mp}+\mathbb{G}_{m}\mathbb{D}_{m} \\
G_{m+1,m} \sigma_{m}^{-1} E_{m}^{T} \\
\end{array}\right),$$
where $\mathbb{D}_{m}$ is the diagonal matrix diag($\sigma_{1}^{-1}I_p,\ldots,\sigma_{m}^{-1}I_p$)  and $\lbrace \sigma_{1},\ldots,\sigma_{m} \rbrace$ are the set of interpolation points used in Algorithm 2. The following result gives some algebraic equations obtained from  the  rational block Lanczos algorithm.

 \begin{theorem}\label{theo}
Suppose we have matrices $\mathbb{V}_{m+1}$ and $\mathbb{W}_{m+1}$, which are generated using the rational block Lanczos algorithm (Algorithm 2), specifically for the extra interpolation points at $\sigma_{m}=\infty$. We assume that matrix A is nonsingular and that all the interpolation points $\sigma_{i}, \, i=1,\ldots,m-1$, are finite real numbers that are not equal to zero. Then, it can be shown that there exist block upper Hessenberg matrices $\widetilde{\mathbb{H}}_{m}, \widetilde{\mathbb{G}}_{m}, \widetilde{\mathbb{K}}_{m}$  and $\widetilde{\mathbb{L}}_{m}$, each with dimensions of $(m+1)p \times mp$. These matrices satisfy the following relationships for the left and right Krylov subspaces: 	
 	
 \begin{eqnarray}
A\mathbb{V}_{m} \mathbb{K}_{m} & = & \mathbb{V}_{m+1} \widetilde{\mathbb{H}}_{m} \label{equa3.5} \\
A^{T}\mathbb{W}_{m}{}\mathbb{L}_{m} & = & \mathbb{W}_{m+1} \widetilde{\mathbb{L}}_{m}, \label{equa3.7}  \\
A_{m} & = & \mathbb{W}_{m}^{T} A \mathbb{V}_{m} = \mathbb{H}_{m} \mathbb{K}_{m}^{-1} , \label{equa3.88} 
\end{eqnarray}
where $\mathbb{H}_{m}$, $\mathbb{G}_{m}$, $\mathbb{K}_{m}$ and $\mathbb{L}_{m}$ are the $mp \times mp$ block upper Hessenberg matrices obtained by deleting the last row block vectors of  $\widetilde{\mathbb{H}}_{m}, \widetilde{\mathbb{G}}_{m}, \widetilde{\mathbb{K}}_{m}$  and $\widetilde{\mathbb{L}}_{m}$, respectively.
\end{theorem}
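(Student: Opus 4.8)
The plan is to read the governing one-step recurrence off Algorithm 2, assemble these block-column relations into the global matrix identities, and finally project with $\mathbb{W}_m^T$ to represent $A_m$. First I would record the recurrence. Combining steps 7--14 of Algorithm 2, and observing that the singular value decomposition rescaling $V_{k+1}\leftarrow V_{k+1}Q_kD_k^{-1/2}$, $H_{k+1,k}\leftarrow D_k^{1/2}Q_k^TH_{k+1,k}$ leaves the product $V_{k+1}H_{k+1,k}$ unchanged (because $Q_k$ is orthogonal) while enforcing $W_{k+1}^TV_{k+1}=I_p$, one obtains for a finite pole $\sigma_k$
\[
(I_n-A/\sigma_k)^{-1}AV_k=\mathbb{V}_kH_k+V_{k+1}H_{k+1,k}=\mathbb{V}_{m+1}\widetilde{\mathbb{H}}_mE_k,
\]
where $E_k$ is the $k$-th block column of $I_{mp}$, while for $\sigma_k=\infty$ (in particular $k=m$) the resolvent is absent and $AV_k=\mathbb{V}_{m+1}\widetilde{\mathbb{H}}_mE_k$.

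To reach \eqref{equa3.5} I would clear the resolvent. Multiplying the finite-pole identity on the left by $(I_n-A/\sigma_k)$ gives $AV_k=\mathbb{V}_{m+1}\widetilde{\mathbb{H}}_mE_k-\sigma_k^{-1}A\,\mathbb{V}_{m+1}\widetilde{\mathbb{H}}_mE_k$. For $k<m$ the $(m+1)$-st block row of $\widetilde{\mathbb{H}}_m$ vanishes on column $k$, so $\mathbb{V}_{m+1}\widetilde{\mathbb{H}}_mE_k=\mathbb{V}_m\mathbb{H}_mE_k$, and rearranging with $V_k=\mathbb{V}_mE_k$ yields
\[
A\,\mathbb{V}_m\bigl(E_k+\sigma_k^{-1}\mathbb{H}_mE_k\bigr)=\mathbb{V}_m\mathbb{H}_mE_k.
\]
Since $\mathbb{D}_mE_k=\sigma_k^{-1}E_k$, the bracket equals $(I_{mp}+\mathbb{H}_m\mathbb{D}_m)E_k=\mathbb{K}_mE_k$. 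For $k=m$, the choice $\sigma_m=\infty$ forces $\mathbb{D}_mE_m=0$, hence $\mathbb{K}_mE_m=E_m$ and the infinity relation reads $A\mathbb{V}_m\mathbb{K}_mE_m=AV_m=\mathbb{V}_{m+1}\widetilde{\mathbb{H}}_mE_m$; this same fact makes the last block row of $\widetilde{\mathbb{K}}_m$ zero, so that $\mathbb{K}_m=I_{mp}+\mathbb{H}_m\mathbb{D}_m$ is exactly the $mp\times mp$ truncation. Collecting the $m$ block columns delivers \eqref{equa3.5}, and \eqref{equa3.7} follows verbatim from the dual recurrence for $R_k=(I_n-A/\sigma_k)^{-T}A^TW_k$ together with the biorthogonality conditions.

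For \eqref{equa3.88} I would left-multiply \eqref{equa3.5} by $\mathbb{W}_m^T$. Writing $\mathbb{V}_{m+1}\widetilde{\mathbb{H}}_m=\mathbb{V}_m\mathbb{H}_m+V_{m+1}H_{m+1,m}E_m^T$ and using $\mathbb{W}_m^T\mathbb{V}_m=I_{mp}$ together with $\mathbb{W}_m^TV_{m+1}=0$ (a consequence of the orthogonalization in step~10), the spillover term drops and $\mathbb{W}_m^TA\mathbb{V}_m\mathbb{K}_m=\mathbb{H}_m$, whence $A_m=\mathbb{H}_m\mathbb{K}_m^{-1}$.

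I expect the main obstacles to be two bookkeeping-and-rank points rather than any hard computation. The first is the consistent treatment of the terminal pole: it is precisely $\sigma_m=\infty$ that annihilates the last block row of $\widetilde{\mathbb{K}}_m$ and reconciles the two column families (finite versus infinite) into a single clean identity. The second is the invertibility of $\mathbb{K}_m$ required in \eqref{equa3.88}; here I would argue that if $\mathbb{K}_mx=0$ then \eqref{equa3.5} gives $\mathbb{V}_{m+1}\widetilde{\mathbb{H}}_mx=A\mathbb{V}_m\mathbb{K}_mx=0$, and since $\mathbb{V}_{m+1}$ has full column rank by biorthonormality and $\widetilde{\mathbb{H}}_m$ has full column rank because its subdiagonal blocks $H_{k+1,k}$ are nonsingular QR factors, we conclude $x=0$. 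The nonsingularity of $A$ enters both in this argument and in justifying that the resolvents $(I_n-A/\sigma_k)^{-1}$ act as assumed throughout.
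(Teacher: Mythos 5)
Your proof is correct and follows essentially the same route as the paper's: you extract the one-step recurrence from Algorithm 2, clear the resolvent $(I_n-A/\sigma_k)$, rearrange to identify $\mathbb{K}_m=I_{mp}+\mathbb{H}_m\mathbb{D}_m$, use $\sigma_m=\infty$ to kill the spillover block row, and obtain \eqref{equa3.88} by projecting \eqref{equa3.5} with $\mathbb{W}_m^T$ and biorthogonality. The differences are organizational (you argue column-by-column where the paper aggregates after $m$ iterations) plus two details you supply that the paper leaves implicit, namely the invariance of $V_{k+1}H_{k+1,k}$ under the SVD rescaling and the full-column-rank argument for the invertibility of $\mathbb{K}_m$ (the latter resting on the standard no-breakdown assumption that the QR factors $H_{k+1,k}$ are nonsingular).
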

\begin{proof}
In the ﬁrst iteration k = 1 we use the initial block vectors B and C to construct $V_1$ and $W_1$, which are the biorthogonal first block vectors of $\mathbb{V}_{m+1}$ and $\mathbb{W}_{m+1}$.

In the following iterations, we have 
 $$V_{k+1}H_{k+1,k}=(I_n-A/ \sigma_{k})^{-1} AV_{k}-\mathbb{V}_{k}H_{k}, \ k=1,\ldots,m$$
which can be written as
\begin{equation}\label{equa3.11}
[\mathbb{V}_{k} \ V_{k+1}]\left [ \begin{array}{c}
H_{k} \\
H_{k+1,k} \\
\end{array}\right]=(I_n-A/ \sigma_{k})^{-1} AV_{k}. \ \ k=1,\ldots,m
\end{equation}
Multiplying (\ref{equa3.11}) on the left by $(I_n-A/ \sigma_{k})$ and replacing $V_{k}$ by $\mathbb{V}_{k}E_{k}$ gives 
$$(I_n-A/ \sigma_{k}) \mathbb{V}_{k+1}\left[ \begin{array}{c}
H_{k} \\
H_{k+1,k} \\
\end{array}\right] =A \mathbb{V}_{k}E_{k},$$
where $E_{k}$ is an $kp \times p$ tall thin matrix with an identity matrix of dimension p at the $k^{th}$ block and zero elsewhere. Re-arranging the expression of the last equation as
\begin{equation}\label{equa3.12}
A \mathbb{V}_{k+1} \left (\left[ \begin{array}{c}
E_{k} \\
0 \\
\end{array}\right]+\sigma_{k}^{-1} \left[ \begin{array}{c}
H_{k} \\
H_{k+1,k} \\
\end{array}\right] \right)\
 = \mathbb{V}_{k+1} 
 \left [ \begin{array}{c}
H_{k} \\
H_{k+1,k} \\
\end{array}\right]   , k=1,\ldots,m.
\end{equation}

which gives, after m iterations, the following relation

\begin{equation}
A \mathbb{V}_{m}(I_{mp}+\mathbb{H}_{m}\mathbb{D}_{m})+AV_{m+1}H_{m+1,m} \sigma_{m}^{-1}E_{m}^{T} = \mathbb{V}_{m} \mathbb{H}_{m}+H_{m+1,m} V_{m+1}E_{m}^{T}, 
\end{equation}
finally we have
\begin{equation}
A\mathbb{V}_{m+1} \widetilde{\mathbb{K}}_{m}  =  \mathbb{V}_{m+1} \widetilde{\mathbb{H}}_{m}.
\end{equation}

If one chose $\sigma_{m}$ such that $\sigma_{m}=\infty$, then the last row block vectors of $\widetilde{\mathbb{K}}_{m}$ is zero and consequently the relation (\ref{equa3.5}) is satisfied. In a similar way, the relation (\ref{equa3.7}) can be shown.

The equation (\ref{equa3.88}) can be obtained using the biorthogonality  condition and the relation (\ref{equa3.5}).
\end{proof}

\subsection{Rational approximation}
In this section, we consider rational block Lanczos algorithm described in the previous section for approximating the matrix exponential operation using rational function of the form
\begin{equation}
e^{tA}B \approx r_{m-1}(tA)B.
\end{equation}
Let $\mathbb{V}_m$ and $\mathbb{W}_m$ be the bi-orthogonal basis computed by the rational block Lanczos algorithm (Algorithm 2). The rational Lanczos approximation for $e^{tA}B$ is defined as
\begin{equation*}
X_m(t)=\mathbb{V}_m e^{t A_m} \mathbb{W}_{m}^{T}B, \  \   \textit{where} \  \ A_m= \mathbb{W}_{m}^{T} A  \mathbb{V}_{m}.
\end{equation*} 

Using the fact that $B=V_1 H_{1,0} $ and the bi-orthogonality condition gives
\begin{equation} \label{rationapp}
X_m(t)=\mathbb{V}_m e^{t A_m} E_{1} H_{1,0},
\end{equation} 

where $E_1$ corresponds to the first p columns of the identity matrix $I_{mp}$.

 Since the matrix exponential $X(t)=e^{tA}B $ is the exact solution of the linear ordinary differential equation systems of the form
\begin{equation}
X^{'}(t)=A X(t) \qquad \textit{and} \qquad X(0)=B,
\end{equation}
we define the residual error for the approximation $X_m(t)$ as

\begin{equation}\label{res}
R_m(t)= AX_m(t) - X_{m}^{'}(t),
\end{equation}

and the exact error as
\begin{equation}\label{err}
E_m(t)= X(t) - X_{m}(t).
\end{equation}

The following simple theorem provides an explicit expression for the residual $R_m(t)$ that avoids computing matrix products with the large matrix A. 

\begin{theorem} \label{theores}
 Let $\mathbb{V}_{m}$ and $\mathbb{W}_{m}$  be the matrices generated by Algorithm 2  and $X_m(t) = \mathbb{V}_{m} e^{t A_m} E_{1} H_{1,0}$ be the approximation to $X(t) = e^{tA} B$. Then for any $t \geq 0$ the residual expression can be expressed as :
\begin{equation} \label{theo1}
 R_m(t)=V_{m+1} H_{m+1,m}E_{m}^{T}\mathbb{K}_{m}^{-1} e^{t A_m} E_{1} H_{1,0}
\end{equation}   
\end{theorem}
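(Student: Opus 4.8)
The plan is to differentiate the approximation $X_m(t)$ directly, rewrite the action of $A$ on $X_m(t)$ by means of the rational block Lanczos relation established in Theorem~\ref{theo}, and then observe that the two interior contributions cancel, leaving only the boundary term carried by $V_{m+1}$.

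First I would compute the time derivative. Since $A_m$ commutes with its own matrix exponential and the factors $E_1$ and $H_{1,0}$ are constant in $t$, differentiating $X_m(t)=\mathbb{V}_m e^{tA_m}E_1 H_{1,0}$ termwise gives $X_m'(t)=\mathbb{V}_m A_m e^{tA_m}E_1 H_{1,0}$, where only the exponential factor is affected.

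Next I would evaluate $AX_m(t)=A\mathbb{V}_m e^{tA_m}E_1 H_{1,0}$, and the key step is to re-express $A\mathbb{V}_m$. Starting from (\ref{equa3.5}), $A\mathbb{V}_m\mathbb{K}_m=\mathbb{V}_{m+1}\widetilde{\mathbb{H}}_m$, I would split $\mathbb{V}_{m+1}=[\mathbb{V}_m,V_{m+1}]$ against the block structure of $\widetilde{\mathbb{H}}_m$ (whose leading $mp\times mp$ block is $\mathbb{H}_m$ and whose last block row is $H_{m+1,m}E_m^T$) to obtain $\mathbb{V}_{m+1}\widetilde{\mathbb{H}}_m=\mathbb{V}_m\mathbb{H}_m+V_{m+1}H_{m+1,m}E_m^T$. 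Right-multiplying by $\mathbb{K}_m^{-1}$ and invoking the identity $A_m=\mathbb{H}_m\mathbb{K}_m^{-1}$ from (\ref{equa3.88}) then yields the decomposition $A\mathbb{V}_m=\mathbb{V}_m A_m+V_{m+1}H_{m+1,m}E_m^T\mathbb{K}_m^{-1}$.

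Finally I would substitute both expressions into $R_m(t)=AX_m(t)-X_m'(t)$. Applying the decomposition above to $AX_m(t)$ produces the two summands $\mathbb{V}_m A_m e^{tA_m}E_1 H_{1,0}$ and $V_{m+1}H_{m+1,m}E_m^T\mathbb{K}_m^{-1}e^{tA_m}E_1 H_{1,0}$; the first summand is exactly $X_m'(t)$ and cancels, leaving the claimed formula. I do not expect any genuine analytic difficulty here; the only point that requires care is the block bookkeeping, namely keeping $\mathbb{K}_m^{-1}$ to the left of $e^{tA_m}$, using that $A_m$ commutes with $e^{tA_m}$, and correctly matching the last block row of $\widetilde{\mathbb{H}}_m$ with the appended column $V_{m+1}$, so that the interior terms align and cancel exactly.
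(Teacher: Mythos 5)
Your proposal is correct and follows essentially the same route as the paper's own proof: differentiate $X_m(t)$, use relations (\ref{equa3.5}) and (\ref{equa3.88}) to write $A\mathbb{V}_m=\mathbb{V}_m A_m+V_{m+1}H_{m+1,m}E_m^T\mathbb{K}_m^{-1}$, and cancel the interior term against $X_m'(t)$. The only difference is that you spell out the block-splitting of $\mathbb{V}_{m+1}\widetilde{\mathbb{H}}_m$ explicitly, which the paper leaves implicit.
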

\begin{proof}
 It follows from the expression of the approximation $X_m(t)$ that 
 $$X_m^{'}(t)=\mathbb{V}_{m}A_m e^{t A_m} E_{1} H_{1,0} $$
Using relations (\ref{equa3.5}) and (\ref{equa3.88}), we get
\begin{eqnarray*}
AX_m(t) & = & A \mathbb{V}_{m} e^{t A_m} E_{1} H_{1,0}, \nonumber \\
& = & \mathbb{V}_{m} A_m e^{t A_m} E_{1} H_{1,0} + V_{m+1} H_{m+1,m}E_{m}^{T}\mathbb{K}_{m}^{-1} e^{t A_m} E_{1} H_{1,0} ,  \nonumber \\
\end{eqnarray*}

which gives the following result :

\begin{equation}\label{relatres}
R_m(t)=AX_m(t)-X_{m}^{'}(t)=V_{m+1} H_{m+1,m}E_{m}^{T}\mathbb{K}_{m}^{-1} e^{t A_m} E_{1} H_{1,0}.
\end{equation}

\end{proof}

The above result shows that the exponential residual  $R_m(t)$ can be computed at no additional cost. Then we can use $R_m(t)$ instead of the unknown exact error $E_m(t)$ in the stopping criterion to decide when the Krylov approximation (\ref{rationapp}) is to be considered suffciently accurate.

In the following theorem we establish an error bounds for the rational block Lanczos approximation of $e^{tA}B$.

We first introduce the logarithmic 2-norm of a matrix A  defined by
$$\mu _2(A)=\lambda_{max} \left( \dfrac{A+A^{T}}{2} \right).$$

The logarithmic norm is used to bound the norm of the matrix exponential such that

$$\Vert  e^{tA} \Vert  \leq e^{t \mu _2(A)} \ \  \  t \geq 0,$$

see \cite{ Dat} for more details.

\begin{theorem} \label{theo_err_bound}
  Under the above assumptions, the error in the rational block Lanczos approximation  of $e^{tA}B$ is bounded in the following way :
\begin{equation} \label{bound}
  \Vert X(t) - X_m(t) \Vert \leq  \max_{s \in [0, t]} \Vert  R_{m} (s)  \Vert  \dfrac{e^{t \mu _2(A)} -1}{\mu _2(A)} .
\end{equation}
\end{theorem}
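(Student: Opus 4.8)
The plan is to turn the error into the solution of a small inhomogeneous linear ODE driven by the residual, solve that ODE by the variation-of-constants (Duhamel) formula, and then estimate the resulting integral with the logarithmic-norm bound on $\Vert e^{\tau A}\Vert$.

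First I would set up the differential equation satisfied by $E_m(t)=X(t)-X_m(t)$. By definition of the residual in \eqref{res} we have $X_m^{'}(t)=AX_m(t)-R_m(t)$, while the exact solution obeys $X^{'}(t)=AX(t)$. Subtracting these two identities gives
\begin{equation*}
E_m^{'}(t)=A\,E_m(t)+R_m(t).
\end{equation*}
For the initial condition, I would observe that $X_m(0)=\mathbb{V}_m e^{0\cdot A_m}E_1 H_{1,0}=\mathbb{V}_m E_1 H_{1,0}=V_1 H_{1,0}=B=X(0)$, so that $E_m(0)=0$. Thus $E_m$ solves a linear ODE with zero initial data and forcing term $R_m$.

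Next I would apply Duhamel's formula to this ODE, which yields the closed-form representation
\begin{equation*}
E_m(t)=\int_0^t e^{(t-s)A}\,R_m(s)\,ds .
\end{equation*}
Taking norms and using submultiplicativity together with the logarithmic-norm estimate $\Vert e^{\tau A}\Vert\le e^{\tau\mu_2(A)}$ for $\tau=t-s\ge 0$, I obtain
\begin{equation*}
\Vert E_m(t)\Vert\le\int_0^t e^{(t-s)\mu_2(A)}\,\Vert R_m(s)\Vert\,ds
\le\max_{s\in[0,t]}\Vert R_m(s)\Vert\int_0^t e^{(t-s)\mu_2(A)}\,ds .
\end{equation*}
The remaining integral is elementary: the substitution $u=t-s$ gives $\int_0^t e^{u\mu_2(A)}\,du=(e^{t\mu_2(A)}-1)/\mu_2(A)$, which is exactly the factor appearing in \eqref{bound}.

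I do not anticipate a serious obstacle here; the proof is essentially the variation-of-constants argument combined with one scalar integration. The two points requiring care are the verification that $E_m(0)=0$ (so that no boundary term contaminates the Duhamel representation) and the use of $s\le t$ to guarantee $t-s\ge0$, which is precisely what makes the logarithmic-norm bound applicable. One might also note that when $\mu_2(A)=0$ the quotient $(e^{t\mu_2(A)}-1)/\mu_2(A)$ is interpreted by continuity as $t$, matching $\int_0^t 1\,ds$, so the statement remains valid in that degenerate case.
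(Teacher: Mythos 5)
Your proof is correct and follows essentially the same route as the paper: derive the inhomogeneous ODE $E_m^{'}(t)=AE_m(t)+R_m(t)$ with $E_m(0)=0$, solve it by the variation-of-constants formula, and bound the integral using $\Vert e^{(t-s)A}\Vert\le e^{(t-s)\mu_2(A)}$. Your explicit verification of $X_m(0)=B$ via $\mathbb{V}_m E_1 H_{1,0}=V_1H_{1,0}$ and your remark on the degenerate case $\mu_2(A)=0$ are small refinements the paper omits, but the argument is the same.
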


\begin{proof}
According to relation (\ref{relatres}) we have,

$$X_m^{'}(t)=AX_m(t)-R_m(t),$$
then

\begin{eqnarray*}
E_m^{'}(t) & = & X'(t) - X_m^{'}(t), \nonumber \\
& = &  A e^{t A} B - AX_m(t)+ R_m(t),  \nonumber \\
& = &  A E_m(t) + R_m(t)  \nonumber \\
\end{eqnarray*}
Consider the initial condition 
$$E_m(0)=X(0) - X_m(0)=B - \mathbb{V}_m \mathbb{W}_{m}^{T} B=0,$$
and solve the initial value problem for $E_m(t)$, we obtain

$$E_m(t)=\int _{0}^{t} e^{(t-s)A}  R_m(s) ds,$$
since $t-s > 0$, we have
$$\Vert  e^{(t-s)A} \Vert  \leq e^{(t-s) \mu _2(A)}  $$
which gives

\begin{eqnarray*}
\Vert E_m(t) \Vert &  \leq   &   \left\|    \int _{0} ^{t}  e^{(t-s)A}  R_m(s) ds \right\|    , \nonumber \\
& \leq & \int _{0} ^{t}  \left\| e^{(t-s)A} \right\|    \left\| R_m(s) \right\| ds ,  \nonumber \\
& \leq & \max _{s \in [0, t]} \left\|   R_m(s) \right\|   \int _{0} ^{t}  e^{(t-s) \mu_2 (A)} ds    \nonumber \\
& \leq &   \max _{s \in [0, t]}   \left\| R_m(s) \right\|   \dfrac{e^{t \mu _2  (A)} -1}{\mu _2 (A)}    \nonumber \\
\end{eqnarray*}

 \end{proof}

{\bf Numerical example.} In this experiment, we present some  numerical examples to demonstrate the error
bound obained in thorem \ref{theo_err_bound}. We will construct two testing matrices and plot the approximation error $\Vert X(t) - X_m(t) \Vert $ against our error bound (\ref{bound}) as a function of the number of iterations.

For the first test, we consider the $1600 \times 1600$ diagonal matrix $A = A_1$ of the logarithm
of equispaced values between 0.2 and 0.99. This can be defined as {\tt  $ A_1$
= diag(log(linspace(0.2,0.99,100)))} in Matlab notation. The spectrum of A is contained in the interval
[-1.61, -0.0101].

For the second one, $A = A_2 \in \mathbb{R}^{n \times n}$ is the block diagonal matrix  with $2 \times 2$ blocks of the form  

$$\left [ \begin{array}{ c  c}
a_i    &  c \\
c   &   a_i \\
\end{array}\right],$$

where $c=1/2, a_i=(2i - 1)/(n+1)$ for $i=1,\ldots, n/2$ and n=1600.

The obtained plots for this experiment are given in Figures \ref{fig0.1} and \ref{fig0.2}.

\begin{figure}[!h]
 \begin{center}
\includegraphics[height=3in ,width=2.75in]{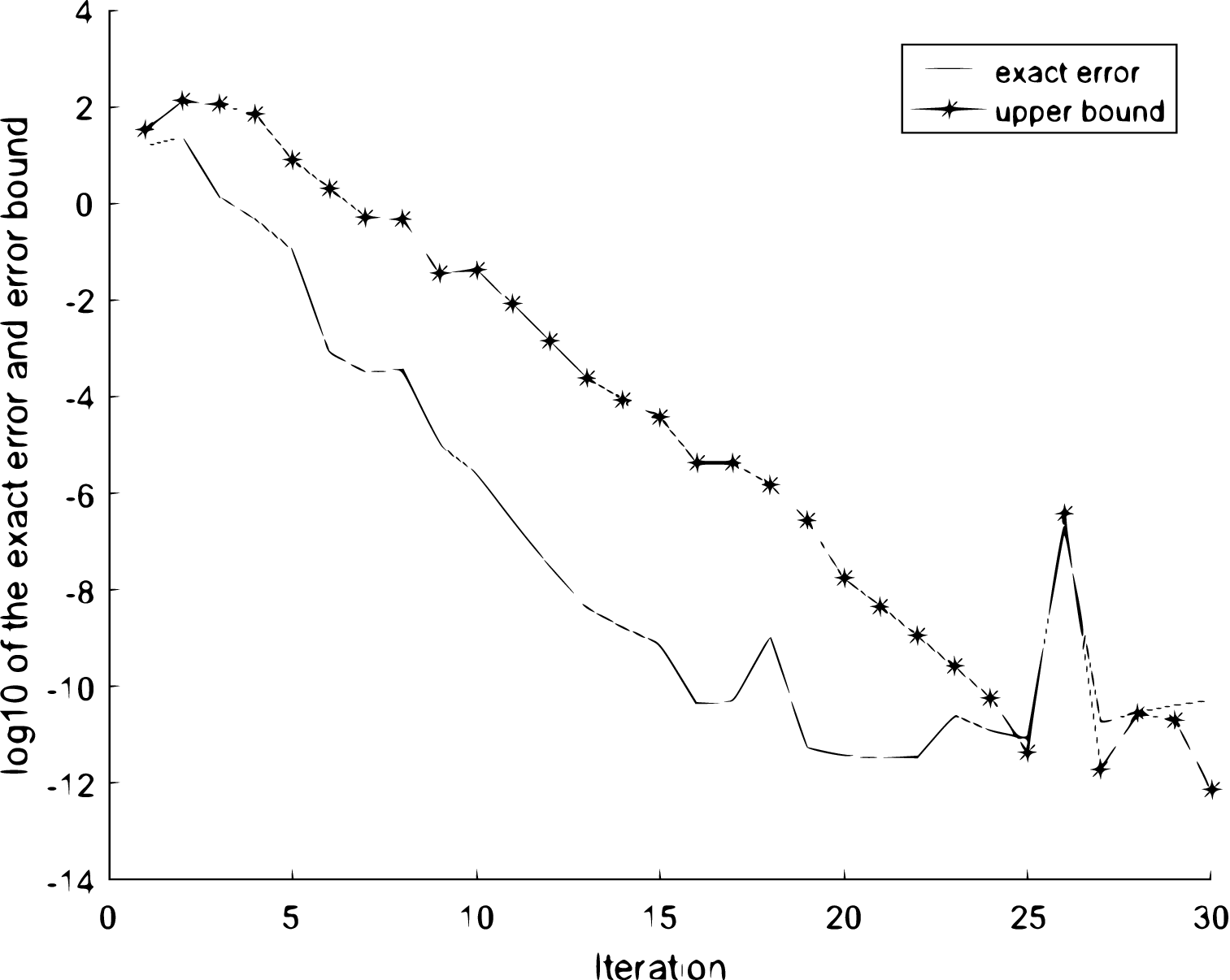}
\includegraphics[height=3in ,width=2.75in]{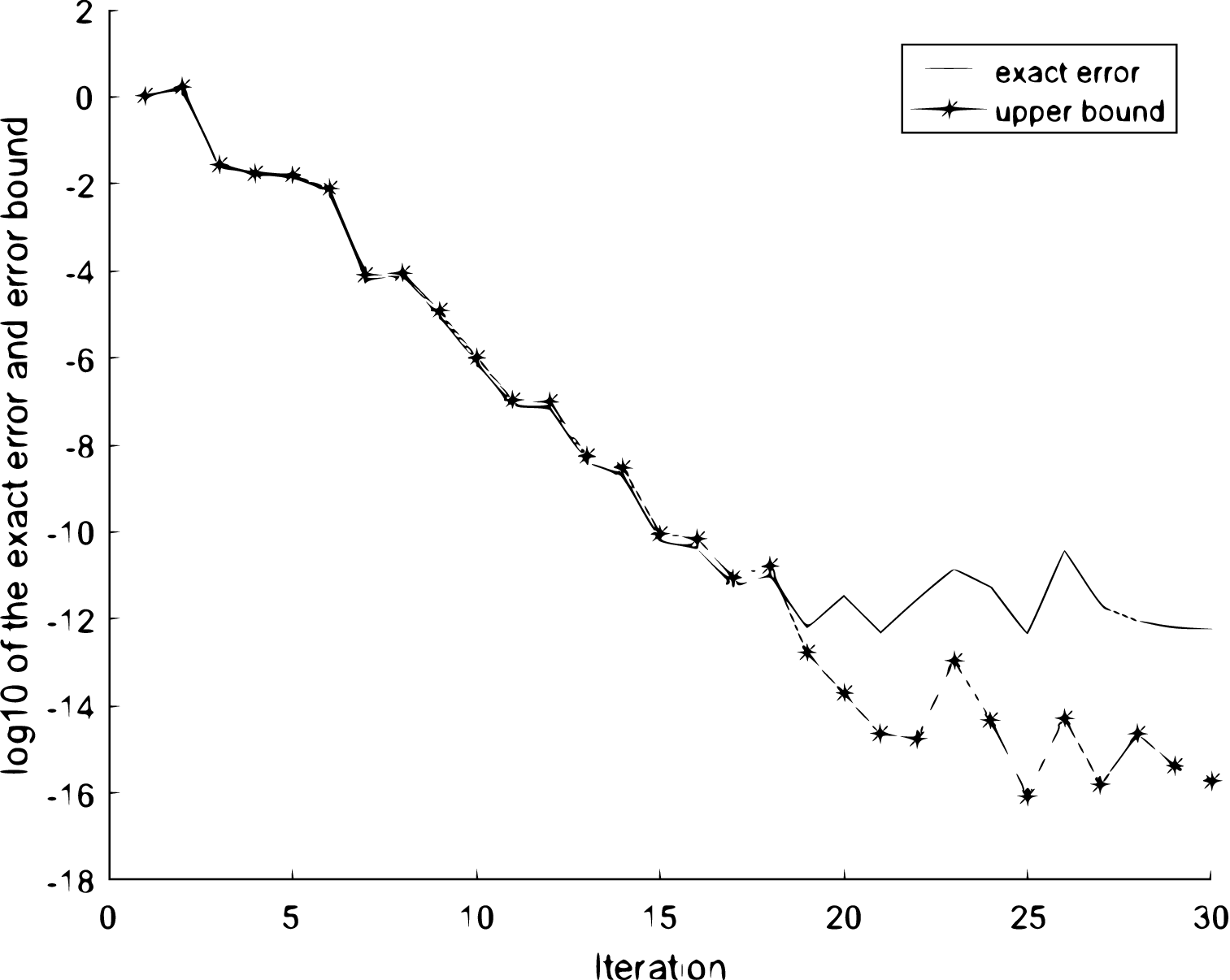}
 \end{center}
\caption{ The exact error (blue line) and the error bound (red line) as a function of iteration number m  using the matrix {\tt A = $A_1$} for $t=1$ (left plot) and $t=10^{-2}$ (right plot).}\label{fig0.1}
\end{figure}

\begin{figure}[!h]
 \begin{center}
\includegraphics[height=3in ,width=2.75in]{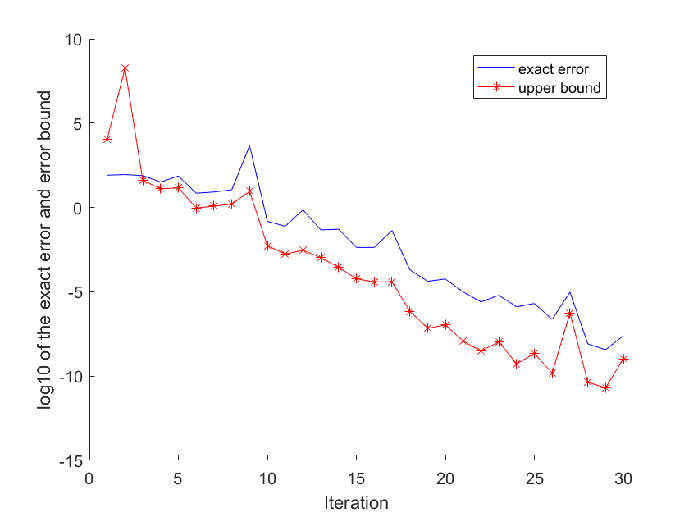}
\includegraphics[height=3in ,width=2.75in]{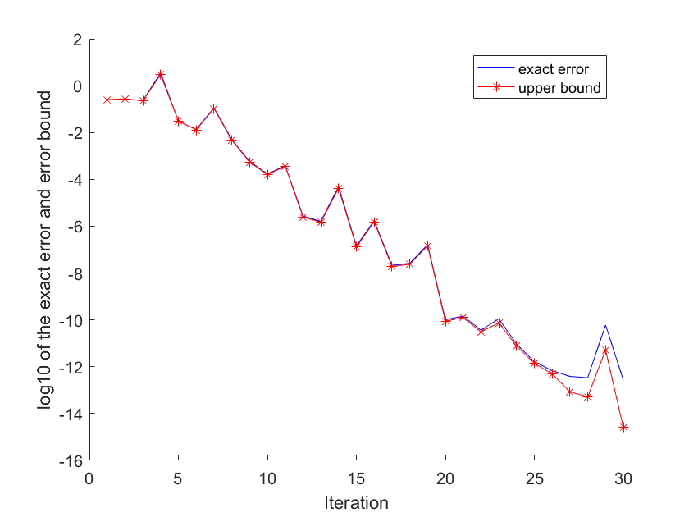}
 \end{center}
\caption{ The exact error (blue line) and the error bound (red line) as a function of iteration number m  using  the matrix {\tt A= $A_2$} for $t=1$ (left plot) and $t=10^{-2}$ (right plot).}\label{fig0.2}
\end{figure}

\section{Adaptive pole selection}
 Rational Krylov methods are generally known for providing more accurate approximations compared to polynomial Krylov methods. However, the selection of appropriate shifts $\sigma_j$ is not a straightforward automated process. It requires careful consideration to ensure a more precise rational Krylov subspace.

In prior research (references \cite{guger3, guger2}), a specific algorithm called the Iterative Rational Krylov Algorithm (IRKA) was proposed within the context of ${\cal H}_{2}$-optimal model-order reduction. The IRKA algorithm offers a method for selecting interpolation points $\sigma_{i}$, $i=1,\ldots,m$. The IRKA begins with an initial set of interpolation points and utilizes them to derive a reduced-order system. Subsequently, a new set of interpolation points is determined as the Ritz values $-\lambda_{i}(A_m), i=1,\ldots,m$, where  $\lambda_{i}(A_m)$ are the eigenvalues of $A_m$. The process iterates until the Ritz values obtained from consecutive reduced-order models reach a point of stagnation, indicating convergence.


Several techniques, as described in references \cite{boden4, grim15,lee22, soppa25}, have been proposed to address the challenge of selecting appropriate interpolation points. These methods focus on constructing the next interpolation point at each step and operate under the principle that the shifts should be chosen in a way that minimizes the norm of specific error approximations at every iteration. The goal of these techniques is to identify interpolation points that contribute to reducing the errors in the approximation process. By minimizing the norm of the approximation errors, the methods aim to enhance the accuracy and quality of the computed results during each iteration.

Druskin and Simoncini \cite{drus7} have introduced an alternative adaptive strategy for selecting the poles  in the rational Arnoldi approximation of the resolvent function. This approach capitalizes on the interpolation properties of the rational Arnoldi method, which also hold true for the rational Lanczos method. 

This approach is based on a representation of the rational residual as

\begin{equation}\label{residu}
\dfrac{r_m(A)B}{r_m(s)}, \qquad r_m(z)=\prod _{j=1}^{m} \dfrac{z- \lambda _j}{z-\sigma _j},
\end{equation}
where $\lambda _j$ are the rational Ritz values $\Lambda (A_m)$.

At iteration m of this method the poles $\sigma_1,\ldots,\sigma_m$ have already been chosen. The next interpolation point, denoted as $\sigma_{m+1}$, is determined as

\begin{equation}
	\sigma_{m+1}=arg \left( \max_{s \in S} \dfrac{1}{\vert r_{m}(s) \vert} \right),
\end{equation}

 where m is the current iteration index. The specific choice of the set S, which influences the selection of $\sigma_{m+1}$, will be discussed later. The algorithm for constructing the new shift $\sigma_{m+1}$ is provided as follows.

\begin{algorithm}[h]
  \caption{The procedure for selecting the shifts}
  \label{selectpoints}
  \begin{itemize} 
\item \textbf{Input :} $ \lbrace \lambda_{j} \rbrace  _{j=1}^{m}, \lbrace \sigma_{j} \rbrace  _{j=1}^{m}$ and the set $\lbrace \eta_{1},\ldots,\eta _{l} \rbrace$;\\
\begin{enumerate}
\item For \ $ j=1,\ldots,l-1$
\item \ \ \ $\mu_{j}=\displaystyle arg \max _{\mu \in [\eta_{j},\eta_{j+1}]} \dfrac{1}{\vert r_{m}(\mu) \vert }, \ r_{m}(z)= \displaystyle  \prod_{i=1}^{m} \dfrac{z-\lambda_{i}}{z-\sigma_{i}}$;
\item end
\item $\sigma_{m+1}=\displaystyle arg \max_{j=1,\ldots,l-1} \dfrac{1}{\vert r_{m}(\mu_{j})\vert } $.
\end{enumerate}
\end{itemize}
\end{algorithm}


\medskip
The set $ \lbrace \lambda_{i} \rbrace  _{i=1}^{m}$ contains the eigenvalues of the matrix $A_m$ at the iteration $m$ and $\lbrace \sigma_{i} \rbrace  _{i=1}^{m}$ represents the previously selected interpolation points.

The set $\lbrace \eta_{1},\ldots,\eta _{l} \rbrace$ contains two given values $\sigma _{0} ^{(1)}, \sigma _{0} ^{(2)} \in \mathbb{R}$ and the previously chosen interpolation points.

By integrating the adaptive approach for selecting interpolation points with the rational block Lanczos algorithm (Algorithm \ref{alg2}), we obtain the Adaptive Rational Block Lanczos (ARBL) algorithm. This process is specifically designed for computing approximations of the matrix exponential. The ARBL algorithm combines the benefits of the rational block Lanczos method with the adaptiveness of the interpolation point selection. By incorporating this adaptive strategy, the ARBL algorithm aims to improve the accuracy and efficiency of the approximation process for the matrix exponential.
\section{Application to the approximation of Cauchy-Stieltjes functions} In science and engineering, one common challenge is computing the product of a matrix-block and a vector, denoted as f(A)B, where  $A \in \mathbb{R}^{n \times n}, B \in \mathbb{R}^{n \times p}$ and f is a function that satisfies certain conditions, such as being analytic in a neighborhood of the eigenvalues of A, denoted as $\Lambda(A)$, which are the set of values that A can multiply a vector by without changing its direction. The function f(A) is called a matrix function, and it is necessary to define it before computing the product. For more information on matrix functions and their definitions, you can refer to the book by Higham \cite{higham}.

This section aims to approximate the action of f(A) on B for certain types of Cauchy-Stieltjes functions using  rational block Lanczos algorithm (Algorithm \ref{alg2}). Cauchy-Stieltjes functions can be expressed in the form 

\begin{equation}
f(x)=\int _{0}^{\infty} \dfrac{\mu (z) }{x+z} d(z),
\end{equation}

where $\mu (z) d(z)$ is a non-negative measure on $[0,\infty]$. Some important examples of such functions are

\begin{equation*}
f_1(x)=x^{-\alpha}=\dfrac{sin(\alpha \pi)}{\pi} \int _{0}^{\infty} \dfrac{z^{-\alpha}}{x+z}  dz \qquad  0 < \alpha  <1,
\end{equation*}

\begin{equation*}
f_2(x)=\dfrac{log(1+x)}{ x}=\int _{1}^{\infty} \dfrac{z^{-1}}{x+z}  dz,
\end{equation*}
and
\begin{equation*}
f_3(x)=\dfrac{e^{-t \sqrt{x}}-1}{ x}=\int _{0}^{\infty} \dfrac{1 }{x+z} \dfrac{sin(t \sqrt{z})}{- \pi z} dz,
\end{equation*}

see \cite{drus2, drus4, higham} for aplications of these functions. Functions of this type also arise in the context of computation of
Neumann-to-Dirichlet and Dirichlet-to-Neumann maps \cite{drus5, arioli}, the solution of systems of stochastic differential equations \cite{allen}, and in quantum chromodynamics \cite{van}.

The rational Lanczos approximation of the matrix function f(A)B is defined as
\begin{equation}
f_m= \mathbb{V}_m f(A_m)\mathbb{W}_m ^{T}B=\mathbb{V}_m f(A_m) E_1 H_{1,0}, 
\end{equation}
where $B=V_1 H_{1,0}, A_m=\mathbb{W}_{m}^{T} A \mathbb{V}_m$ and  $\mathbb{V}_{m}$, $\mathbb{W}_{m}$ are the matrices generated  by Algorithm \ref{alg2}.
\section{numerical results}
In this section, we illustrate the performance of the adaptive  rational block Lanczos method (ARBL) when approximating $e^{t A} B$ for multiple
values of $t>0$ using the {\tt(ARBL)} algorithm and for approximating $f(A)B$ for some
Cauchy-Stieltjes functions. All experiments were conducted using MATLAB R2016a on a computer equipped with an Intel Core i7 at 1.8GHz and 16GB of RAM.

The system of equations with the matrix $(I_n-A/ \sigma_k)$ in Algorithm \ref{alg2}, line 8 is
solved by means LU or Cholesky factorization of $(I_n-A/ \sigma_k)$ by using the backslash $ \setminus $ operator of Matlab. 

 The purpose of the first and second experiments is to illustrate with the help of simple examples how the exact error $\Vert X(t) - X_m(t) \Vert_{\infty}$ behave in practice. The third one give the numerical resulats for the residual error derived in theorem \ref{theores}. For the last experiment, we give the numerical results for approximating $f(A)B$ for some Cauchy-Stieltjes functions.

\subsection{Examples for approximations of $e^{t A}B$}

\textbf{Example 1.} For this experiment, we plotted the exact error $\Vert X(t) - X_m(t)  \Vert_{\infty}$ as a function of time t, where $t \in [10^{-1}, 1]$ and for a fixed number of iteration m. 

For the first example, the corresponding matrix {\tt A=fdm} is obtained from the centered finite difference discretization of the operator
$$L_{A}(u)  =  \Delta u-f(x,y) \frac{\partial u}{\partial x}-g(x,y) \frac{\partial u}{\partial y}-h(x,y)u,$$
on the unit square $[0, 1] \times [0, 1]$ with homogeneous Dirichlet boundary conditions with
\begin{equation*}
\left\{ 
\begin{array}{c c c}
f(x,y) & = & e^{xy},\\ 
g(x,y) & = & sin(xy),\\
h(x,y) & = & y^2-x^2.\\
\end{array}
\right. 
\end{equation*}
The number of inner grid points in each direction was $n_0 = 40$ and the dimension of A is $n = n_{0}^{2}=1600$. 

For the second example, the matrix {\tt A = poisson} is obtained from the centered finite difference discretization of the two-dimensional Poisson operator  
$$u_{xx} + u_{yy} = f(x,y)$$
with Dirichlet boundary conditions  on  $[0, 1] \times [0, 1]$ with $f(x,y)= 1.25 \times exp(x+\dfrac{y}{2})$.  The dimension of the matrix A is $n=n_{0}^{2}=1600$.


For both examples B and C are chosen to be a random matrix with entries uniformly distributed in $[0, 1]$ and $p=3$.

The obtained plots for this experiment are given in Figures \ref{fig1.1} and \ref{fig1.2}.

\begin{figure}[!h]
 \begin{center}
\includegraphics[height=3in ,width=2.75in]{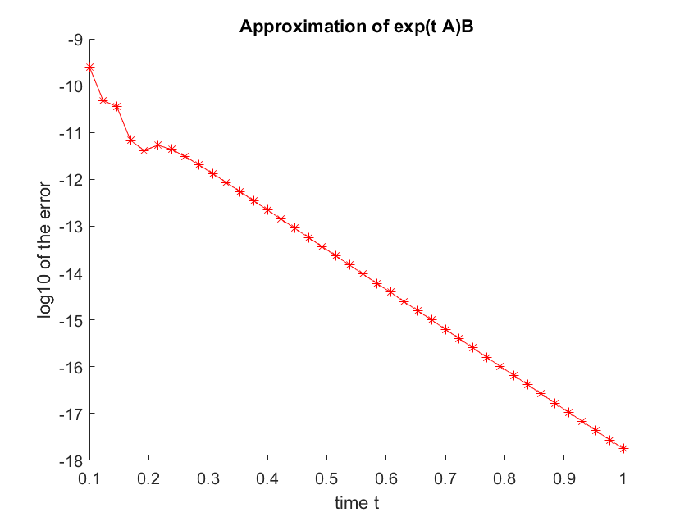}
\includegraphics[height=3in ,width=2.75in]{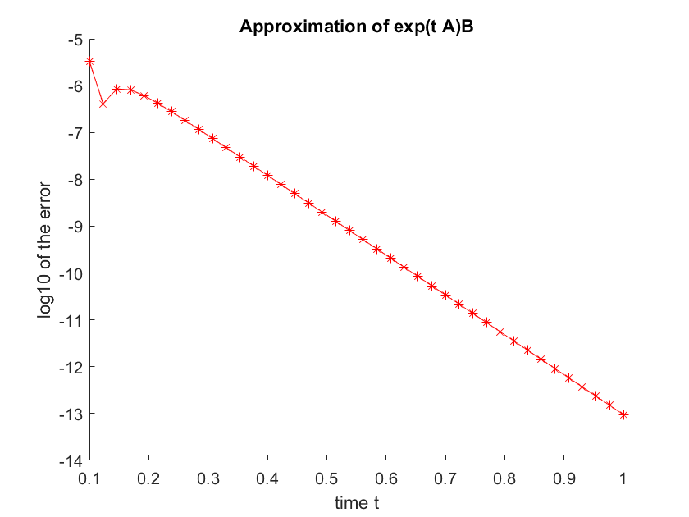}
 \end{center}
\caption{ The exact error as a function of time  using the matrix {\tt A = fdm} for $m=15$ (left plot) and $m=10$ (right plot).}\label{fig1.1}
\end{figure}

\begin{figure}[!h]
 \begin{center}
\includegraphics[height=3in ,width=2.75in]{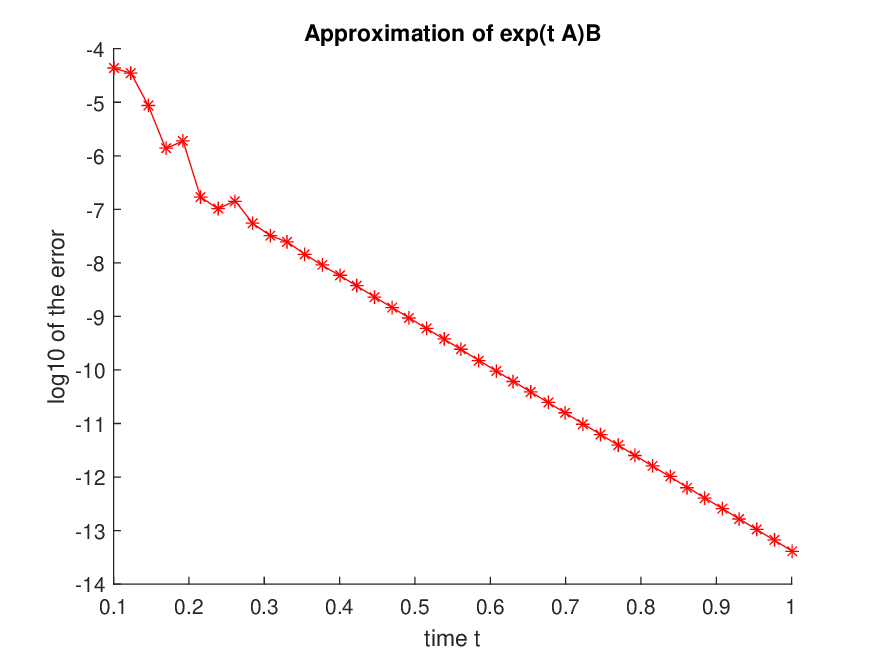}
\includegraphics[height=3in ,width=2.75in]{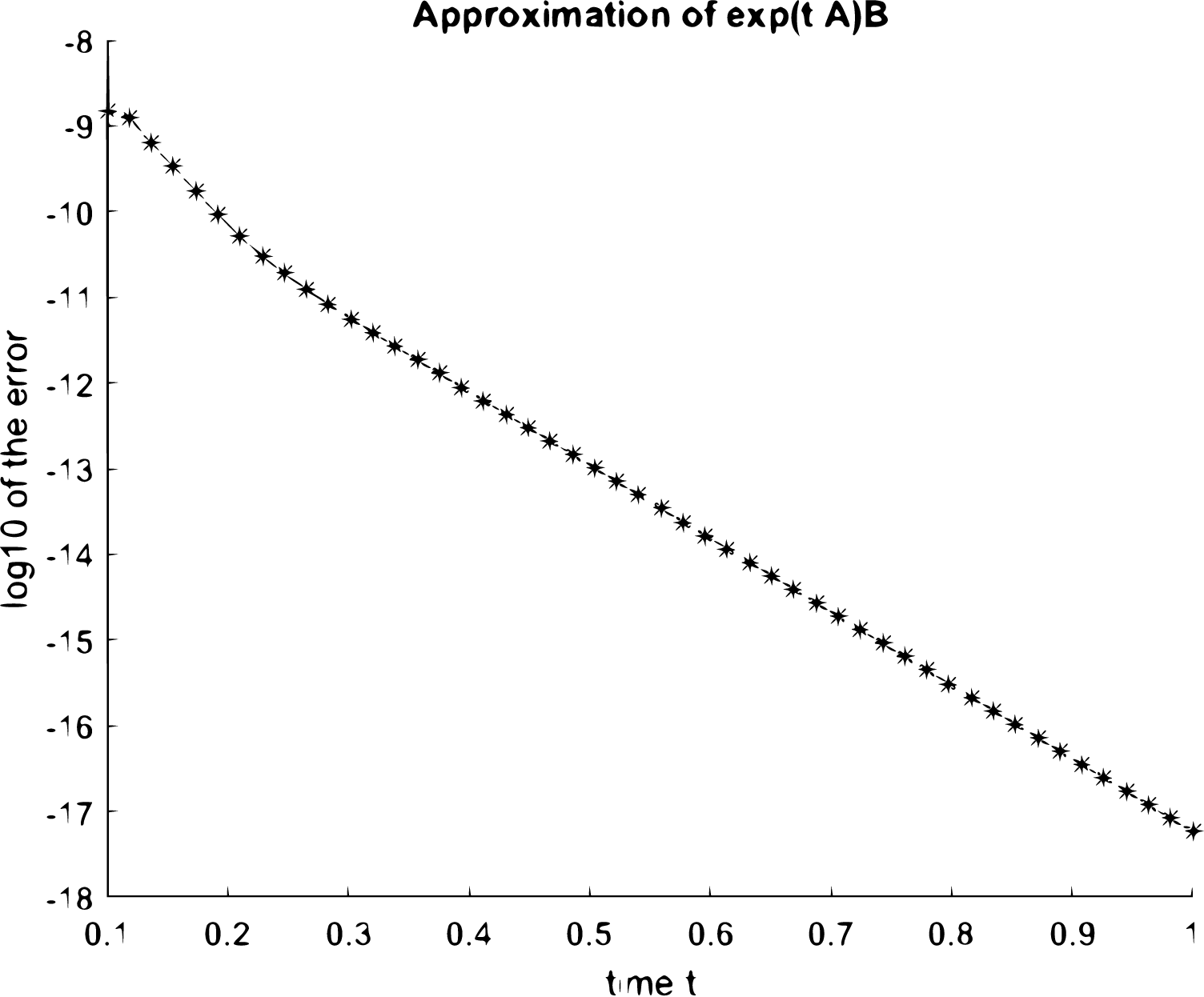}
 \end{center}
\caption{ The exact error as a function of time using the matrix {\tt A=poisson} for $m=10$ (left plot) and $m=15$ (right plot).}\label{fig1.2}
\end{figure}

\newpage

\textbf{Example 2.}  For the second experiment, we give the $H_{\infty}$ exact error for different values of iterations $m \in \lbrace 10, 15, 20, 25, 30, 40  \rbrace $ and a fixed scalar t.

For the first example, we consider the matrix {\tt A = poisson} using in the fist experiment with n=6400 ($n_0=80$). For the second example, we use the matrix {\tt A = add32} from Sparse Matrix collection.



 For both examples B and C are chosen to be a random matrix with entries uniformly distributed in $[0, 1]$. The number of block vectors of matrices B and C are  p=3 for the first example and p=4 for the second one.

The obtained results for this experiment are given in table \ref{tab1}.

\begin{table}[!h]
\begin{center}
\caption{$H_{\infty}$ exact error of $e^{tA}B$ for multiple values of dimension m }\label{tab1}
\begin{tabular}{l|c|c}
\hline
\quad \quad \quad \quad Test problem & \;\;  dim.(m) &  \;\;  $ \Vert X-X_m \Vert _{\infty} $ \\  
\hline
A={\tt poisson} &10 & $5.38 \times 10^{-15}$   \\ 
n=6400, p=3 &  20& $3.74 \times  10^{-19}$ \\

t=1&30 & $1.10 \times 10^{-19}$ \\
& 40& $3.37 \times 10^{-19}$ \\
\hline
A={\tt poisson}  &10 & $2.40 \times 10^{-20}$  \\ 
 n=6400, p=3 &  20& $1.87 \times  10^{-23}$ \\

t=2&30 & $1.03 \times 10^{-27}$\\
& 40& $4.54 \times 10^{-28}$ \\
\hline
A={\tt add32} &10 & $3.46 \times 10^{-15}$  \\ 
n=4960 , p=4 &  20& $3.14 \times  10^{-13}$ \\

$t=10^{-2}$&30 & $4.25 \times 10^{-15}$\\
& 40& $5.36 \times 10^{-14}$ \\
\hline
A={\tt add32} &10 & $3.58 \times 10^{-15}$   \\ 
n=4960 , p=4 &  20& $ 2.14 \times  10^{-14}$  \\

$t=10^{-1}$&30 & $ 1.27 \times 10^{-14}$ \\
& 40& $7.17 \times 10^{-14}$ \\
\hline

\end{tabular}
\end{center}
\end{table}

\textbf{Example 3.} 
In this experiment, we give the $H_{\infty}$ norm of the residual error given in theorem \ref{theores} for different values of time t and a fixed iteration m.

we use the  matrices: {\tt Rail3113}, {\tt Rail5177} and {\tt Rail20209}  from Benchmark collections \cite{mehrman}. We use also the matrix {\tt add32} from Sparse Matrix collection.

B is chosen to be a random matrix with entries uniformly distributed in $[0, 1]$ for  {\tt add32} matrix. For the others examples, B is constructed by {\tt Rail3113}, {\tt Rail5177} and {\tt Rail20209} problems.

For all examples, the matrix C is chosen  such that $C=B^T$.

In table \ref{tab2}, we report the space of dimension
(dim), the $H_{\infty}$ residual norm $\Vert R_{m} \Vert _{\infty}$ and the required CPU time for six values of time step t  $\in \lbrace 10^{-2}, 5.10^{-2}, 10^{-1}, 5.10^{-1}, 1, 10 \rbrace$.

\begin{table}[!h]
\begin{center}
\caption{$H_{\infty}$ residual error of $e^{tA}B$ for multiple values of time step t  }\label{tab2}
\begin{tabular}{l|c|c|c|c}
\hline
\quad \quad \quad \quad Test problem & \;\;  dim.(m) & \;\;   t  & \;\;  $ \Vert R_{m} \Vert _{\infty} $ & \;\; time (s) \\  
\hline
 & & $10^{-2}$ & $2.75 \times 10^{-9}$ &  \\ 
     &   &  $5. 10^{-2}$ & $1.79 \times 10^{-10}$ & \\
{\tt add32}&  10& $10^{-1}$& $4.69\times  10^{-8}$& 100 \\
n=4960, p=4 & & $5.10^{-1}$& $2.24 \times  10^{-8}$ & \\
& & 1 & $5.11 \times 10^{-8}$& \\
& & 10& $5.89 \times 10^{-7}$& \\
\hline
 & & $10^{-2}$ & $1.84 \times 10^{-10}$ &  \\ 
     &   &  $5. 10^{-2}$ & $1.35 \times 10^{-9}$ & \\
{\tt Rail3113}&  10& $10^{-1}$& $2.11 \times  10^{-9}$& 5.14 \\
n=3113, p=6& & $5.10^{-1}$& $2.45\times  10^{-9}$ & \\
& & 1 & $1.48 \times 10^{-10}$& \\
& & 10& $1.30 \times 10^{-9}$& \\
\hline
& & $10^{-2}$ & $2.40 \times 10^{-25}$ &  \\ 
     &   &  $5. 10^{-2}$ & $8.38 \times 10^{-27}$ & \\
{\tt Rail5177}&  20& $10^{-1}$& $5.40 \times  10^{-22}$& 33 \\
n=5177, p=6 & & $5.10^{-1}$& $1.03 \times  10^{-25}$ & \\
& & 1 & $1.44 \times 10^{-19}$& \\
& & 10& $3.49 \times 10^{-25}$& \\
\hline
& & $10^{-2}$ & $1.59 \times 10^{-32}$ &  \\ 
     &   &  $5. 10^{-2}$ & $7.43 \times 10^{-32}$ & \\
{\tt Rail20209}&  25& $10^{-1}$& $3.42 \times  10^{-33}$& 600 \\
n=20209, p=6 & & $5.10^{-1}$& $2.16 \times  10^{-26}$ & \\
& & 1 & $2.57 \times 10^{-30}$& \\
& & 10& $8.91 \times 10^{-27}$& \\
\hline
\end{tabular}
\end{center}
\end{table}

\subsection{Examples for approximations of f(A)V, for Cauchy-Stieltjes functions}
In the following examples,  we show the performance  of the  rational block Lanczos algorithm (Algorithm \ref{alg2}) when approximating $f(A)B$. We used three Cauchy-Stieltjes functions:

$$ f_1(x)=x^{-1/2}, \   \    \  f_2(x)=\dfrac{log(1+x)}{x}, \     \    \  f_3(x)=exp(x).$$

In all examples, the initial block vectors B and C are generated randomly with uniformly distributed entries in the interval [0; 1] and the block size p is 5. 

\textbf{Example 4.} In this experiment, we consider $f_1(x)=x^{-1/2}$, this function is used for the Newmann-to-Dirichlet map \cite{drus5} of the differential problem
$$\dfrac{d^2 U(t)}{dt} - AU(t) =0, \   \ t>0; \       \    U'(0)=-B \      \textit{and}     \      U ( \infty)=0,$$

where $U(0)=A^{-1/2}B$.

The matrix A was obtained from the three point central finite difference discretization (CFDD) of the following partial differential equation on the unit square $[0; 1] \times [0; 1]$ with Dirichlet homogeneous boundary conditions

\begin{equation*}
\left\{ 
\begin{array}{c c c}
\dfrac{\partial U}{\partial t}- {\cal L}_i(U) & = & 0 \ \  \textit{on} \ \  \ (0,1)^2 \times (0,1), \\ 
U(x,y,t) & = & \qquad 0 \ \ on \ \ \  \partial (0,1)^2 \ \  \forall \ t\in [0,1], \\
U(x,y,0) & = & B \ \  \forall \ x, y \in [0,1]^2. \\
\end{array}
\right. 
\end{equation*}
The number of discretization in each direction is $n_0$ and the dimension of the matrix A is $n = n_{0}^{2}$. We chose two different matrices $A_1$ and $A_2$ , which are coming from CFDD of
the following operators

\begin{eqnarray*}
{\cal L}_1(u) & = & -u_{xx}-u_{yy}, \\ 
{\cal L}_2(u) & = & -100 u_{xx} - u_{yy}+10x u_{x}.\\
\end{eqnarray*}

In table \ref{tab3}, we display the exact  error $\Vert f(A)B-f_m \Vert _{\infty}$ and the required CPU time  for the space of dimension $m \in \lbrace 20, 30, 40  \rbrace $. 
\begin{table}[!h]
\begin{center}
\caption{$H_{\infty}$ exact error of $A^{-1/2}B$ for the operators ${\cal L}_1(u), {\cal L}_2(u)$ and for multiple values of n.}\label{tab3}
\begin{tabular}{l|c|c|c|c}
\hline
\quad \quad \quad \quad Test problem & \;\; n & \;\;  dim.(m)   & \;\;  $ \Vert f(A)B-f_m \Vert _{\infty} $ & \;\; time (s) \\  
\hline
${\cal L}_1(u)$  & 3600 & 20 & $4.32\times 10^{-9}$ & 3.41\\ 
 & & 30  & $8.07 \times 10^{-10}$ & 4.54 \\
  &  & 40&  $6.25 \times  10^{-12}$& 6.96 \\
& 6400 & 20 & $5.35 \times  10^{-11}$ & 11.10 \\
& & 30 & $3.31 \times 10^{-11}$& 14.49\\
& & 40 & $2.71 \times 10^{-12}$& 18.92\\
& 10000 & 20 & $3.04\times  10^{-8}$ & 24.09 \\
& & 30 & $2.76 \times 10^{-11}$& 30.83\\
& & 40 & $1.68 \times 10^{-11}$& 38.72 \\

\hline
${\cal L}_2(u)$  & 3600 & 20 & $2.29\times 10^{-12}$ & 5.28\\ 
 & & 30  & $8.01 \times 10^{-13}$ & 7.65 \\
  &  & 40&  $4.41 \times  10^{-13}$& 8.58 \\
& 6400 & 20 & $2.40\times  10^{-10}$ & 14.50 \\
& & 30 & $4.01 \times 10^{-12}$& 18.81\\
& & 40 & $3.39 \times 10^{-13}$& 21.74\\
& 10000 & 20 & $1.20\times  10^{-12}$ & 35.59 \\
& & 30 & $1.29 \times 10^{-12}$& 41.35\\
& & 40 & $7.63 \times 10^{-12}$& 49.27 \\
\hline
\end{tabular}
\end{center}
\end{table}

{\bf Example 5.}  For the second experiment, we determine approximations of the matrix functions $log(A_1+I_n) A_1^{-1}B$ and $log(A_2+I_n)A_2^{-1}B$, where $A_1 = \textit{tridiag}(1, 2, 1) \in \mathbb{R}^{n \times n}$ and $A_2 \in \mathbb{R}^{n \times n}$ is the block diagonal matrix  with $2 \times 2$ blocks of the form  

$$\left [ \begin{array}{c c}
a_i &  c \\
c & a_i \\
\end{array}\right],$$

where $c=1/2$ and $a_i=(2i - 1)/(n+1)$ for $i=1,\ldots, n/2$. The approximation errors and CPU times are listed in table \ref{tab4}.

\begin{table}[!h]
\begin{center}
\caption{$H_{\infty}$ exact error of $log(A_1+I_n)A_1^{-1}B$ and $log(A_2+I_n)A_2^{-1}B$  for multiple values of n.}\label{tab4}
\begin{tabular}{l|c|c|c|c}
\hline
\quad \quad \quad \quad Test problem & \;\; n & \;\;  dim.(m)   & \;\;  $ \Vert f(A)B-f_m \Vert _{\infty} $ & \;\; time (s) \\  
\hline
$A_1$  & 2500 & 20 & $1.52 \times 10^{-7}$ & 2.09\\ 
 & & 30  & $2.04 \times 10^{-8}$ & 2.69 \\

& 5000 & 20 & $6.20  \times  10^{-8}$ & 7.24 \\
& & 30 & $8.63 \times 10^{-10}$& 8.20\\

& 7500 & 20 & $3.22\times  10^{-8}$ & 20.05 \\
& & 30 & $6.57 \times 10^{-8}$& 21.35\\

& 10000 & 20 & $1.22 \times  10^{-7}$ & 46.89 \\
& & 30 & $1.56 \times 10^{-9}$& 47.36\\

\hline
$A_2$  & 2500 & 20 & $4.21  \times 10^{-8}$ & 1.37\\ 
 & & 30  & $1.02 \times 10^{-10}$ &1.70 \\

& 5000 & 20 & $7.65  \times  10^{-7}$ & 5.39 \\
& & 30 & $9.62 \times 10^{-9}$&  5.98\\

& 7500 & 20 & $5.10\times  10^{-7}$ & 14.08 \\
& & 30 & $7.32\times 10^{-11}$& 14.80\\

& 10000 & 20 & $3.75\times  10^{-8}$ & 30.70 \\
& & 30 & $1.96 \times 10^{-10}$& 31.82\\

\hline
\end{tabular}
\end{center}
\end{table}

{\bf Example 6.}  For the last experiment, we consider the same matrices  $A=A_1$ and $A=A_2$ using in the previous example to  approximate the matrix function $exp(A)$.

We give the  exact error $\Vert f(A)B-f_m \Vert _{\infty}$ and the required CPU time in table \ref{tab5} for different values of n.

\begin{table}[!h]
\begin{center}
\caption{$H_{\infty}$ exact error of $e^A$.}\label{tab5}
\begin{tabular}{l|c|c|c|c}
\hline
\quad \quad \quad \quad Test problem & \;\; n & \;\;  dim.(m)   & \;\;  $ \Vert f(A)B-f_m \Vert _{\infty} $ & \;\; time (s) \\  
\hline
$A_1$  & 2500 & 20 & $1.17 \times 10^{-9}$ & 1.55\\ 
 & & 30  & $2.10 \times 10^{-9}$ & 2.06 \\

& 5000 & 20 & $3.13  \times  10^{-9}$ & 6.67 \\
& & 30 & $3.44 \times 10^{-10}$&7.33\\

& 7500 & 20 & $4.33 \times  10^{-9}$ & 18.12 \\
& & 30 & $1.36 \times 10^{-8}$ & 19.45\\

& 10000 & 20 & $5.96  \times  10^{-10}$ & 42.50 \\
& & 30 & $6.83  \times 10^{-9}$ & 43.61\\

\hline
$A_2$  & 2500 & 20 & $2.33  \times 10^{-11}$ & 1.08\\ 
 & & 30  & $1.77 \times 10^{-10}$ & 1.40 \\

& 5000 & 20 & $3.85  \times  10^{-11}$ & 4.48 \\
& & 30 & $3.81  \times 10^{-11}$ &  5.25\\

& 7500 & 20 & $1.12 \times  10^{-11}$ & 12.85 \\
& & 30 & $2.50 \times 10^{-10}$  & 13.39\\

& 10000 & 20 & $6.85 \times  10^{-11}$  & 26.39 \\
& & 30 & $4.52 \times 10^{-11}$  &  27.68\\
\hline
\end{tabular}
\end{center}
\end{table}

\section{Conclusion}
In this paper, we introduce a new adaptive rational block Lanczos process, which finds application in approximating the matrix exponential and certain Cauchy-Stieltjes functions. We also derive a new expression for the rational residual error and establish error bound for the exact error. By conducting numerical experiments, we demonstrate the effectiveness of the rational block Lanczos subspace method, showcasing its strong performance. We assess the proposed procedure on benchmark problems of medium and large dimensions, and the results indicate that the adaptive approach yields accurate approximations of small dimension.

\end{document}